\newtheorem{theorem}{Theorem}[section]
\newtheorem{lemma}[theorem]{Lemma}
\newtheorem{corollary}[theorem]{Corollary}
\theoremstyle{definition}
\theoremstyle{remark}
\newtheorem{remark}[theorem]{Remark}
\numberwithin{equation}{section}
\newcommand{\la}{\lambda}
\newcommand{\al}{\alpha}
\newcommand{\ep}{\epsilon}
\begin{document}

\title[Macdonald functions at roots of unity and Newton's identity]
{Modular Macdonald functions and
Generalized Newton's identity}
\author{Tommy Wuxing Cai}
\address{School of Mathematical Sciences,
South China University of Technology, Guangzhou, Guangdong 510640, China
and Department of Mathematics,
MIT, Cambridge, MA 02139-4307, USA}
\email{caiwx@scut.edu.cn}
\author{Naihuan Jing}
\address{
Department of Mathematics,
   North Carolina State University,
   Raleigh, NC 27695-8205, USA}
\email{jing@math.ncsu.edu}
\author{Jian Zhang}
\address{School of Mathematical Sciences,
South China University of Technology, Guangzhou, Guangdong 510640, China}
\email{j.zhang1729@gmail.com}
\thanks{*Corresponding author: Naihuan Jing}
\keywords{Newton's identities, Macdonald functions, modular Hall-Littlewood polynomials, vertex operators}
\subjclass[2010]{Primary: 05E05; Secondary: 17B69, 05E10}

\begin{abstract}
Based on a generalized Newton's identity, we construct a family of symmetric functions which deform the modular
Hall-Littlewood functions.
\end{abstract}
\maketitle
\section{Introduction}

Let $\Lambda_F$ be the ring of symmetric functions over the field $F=\mathbb Q(q, t)$. Macdonald \cite{M}
introduced an orthogonal family of symmetric functions $Q_{\lambda}(q, t)$ and their
dual family $P_{\lambda}(q, t)$ indexed by partitions $\lambda$.
When $q=0$, the functions $Q_{\lambda}(t)=Q_{\lambda}(0, t)$ are
the well-known Hall-Littlewood polynomials, which include the Schur Q-functions
as a special example when $t=-1$. Morris and Saltana \cite{MS} studied the
Hall-Littlewood polynomials $Q_{\lambda}(\xi_m)$ when $t=\xi_m$ an $m$th root of $1$ and
suggested that they have close relation with
modular representation theory of the symmetric group, which was confirmed by
\cite{LLT}. Recently a spin invariant theory was also established by
Wan and Wang \cite{WW1, WW2} where a strong representation theoretic explanation
was found for a similar but different deformation of Schur Q-functions.

It is well-known that this kind of specialization in Hall-Littlewood polynomials to
Schur Q-functions or more generally modular Hall-Littlewood functions
must be dealt separately with
care, because the original
argument for the general case often does not work
at the modular case. For example, in \cite{M} one section was devoted to Schur $Q$-functions indexed by
strict partitions.

In this work we consider the subring $\Lambda^{(m)}$ of symmetric functions generated by
power-sum symmetric functions associated with $m$-regular partitions (i.e. $m$ does not divide
each part). We will directly
prove that there exists a family of orthogonal polynomials $Q_{\lambda}(q, \xi_m)$, $\lambda$
running through $m$-reduced partitions (i.e. with multiplicities $m_i(\lambda)<m$), which deforms the modular Hall-Littlewood polynomials
$Q_{\lambda}(\xi_m)$.

The Macdonald polynomials have been characterized as the eigenfunctions of the Macdonald operator
\cite{M} and the Macdonald symmetric functions were also shown to be eigenfunctions
of certain vertex operator like operators \cite{AMOS, CW, GH, S, NS} (also see \cite{CJ2, CJ4}).
We follow the same strategy together
with a new technique of Newton's identity \cite{CJ4} in the modular cases.
We will construct a graded differential operator $X(z)=\sum_nX_nz^{-n}$ on the space $\Lambda^{(m)}$
with certain triangular property.
The modular Macdonald polynomials
are shown to be the distinguished eigenvectors for the differential operator $X_0$.
We remark that this differential operator is not a specialization of the Macdonald operator
and its limit.
It is an open problem whether there exists two parameter deformation of the modular Hall-Littlewood
polynomials, which are suggested by
the interesting case of Schur Q-functions \cite{WW1, WW2}.

The paper is organized as follows. In section two we first recall the background information of
symmetric functions and give special attention to the subring $\Lambda^{(m)}$ generated by
symmetric functions indexed by $m$-regular partitions. We study the main technique of the
generalized Newton identity for the generalized
complete homogeneous polynomials in section three. We then construct
certain differential operator acting on
the subring $\Lambda^{(m)}$ in section four where some of its basic properties
are studied. In particular we prove a generalized Newton-like identity
and then derive the existence of generalized Hall-Littlewood functions.

\section{Partitions and modular symmetric functions}
First we fix the notations for partitions following \cite{M}.
A partition $\la=(\la_1,\cdots,\la_l)$ of $n$, denoted by $\la\vdash n$, is a sequence of
weakly decreasing non-negative integers such that $n=\sum_i\la_i$, where $\la_i$ are called its parts
and $n$ is called the weight and denoted by
$|\la|=n$. When the parts are arranged increasingly
$\la=(1^{m_1}2^{m_2}\cdots)$, then
$m_i=m_i(\la)$ is called the multiplicity of $i$ in $\la$.
With this notation the length of
$\la$ is defined to be $l(\la)=\sum_im_i$. The
union $\la\cup\mu$ of two partitions $\lambda$ and $\mu$ is defined by
$m_i(\la\cup\mu)=m_i(\la)+m_i(\mu)$ for all $i$. Let $\mathcal P(n)=\{\lambda\vdash n\}$
be the set of partitions of $n$ and $\mathcal P=\cup_{n=1}^{\infty}\mathcal P(n)$ the set of
all partitions. For $\la,\mu\in\mathcal P(n)$, the dominance order
$\la\geq\mu$ is defined by $\sum_{j=1}^i\lambda_j\geq \sum_{j=1}^i\mu_j$
for
all $i$. We write $\la>\mu$ if $\la\geq\mu$ but $\la\neq\mu$. If
$m_i(\mu)\leq m_i(\la)$ for all $i$, we denote $\mu\subset'\la$ and
define $\la\backslash\mu\in\mathcal P$ by
$m_i(\la\backslash\mu)=m_i(\la)-m_i(\mu)$. We also define $m(\la)!=\prod_{i\geq1}m_i(\la)!$.

 From now on we fix a positive integer $m\geq2$, and let $\xi_m$ be a fixed primitive $m$th root of unity.

If each nonzero part of $\la$
is not divisible by $m$, the partition $\lambda$ is called {\it $m$-regular}
and we denote $m\nmid\la$, and let $\mathcal P^{(m)}$ denote the set
of $m$-regular partitions.  Accordingly $\mathcal P^{(m)}(n)=\mathcal P^{(m)}\cap \mathcal P(n)$.
 If each multiplicity of $\la$ satisfies $m_i(\la)< m$, the partition $\lambda$ is called
 {\it $m$-reduced}, and 
we use $\mathcal P_m$ to denote the set
of $m$-reduced partitions. Similarly $\mathcal P_m(n)=\mathcal P_m\cap \mathcal P(n)$.

 \begin{lemma}\label{L:2typeofpartions}
 For positive integers $m,n$, the set $\mathcal P^{(m)}(n)$ of $m$-regular partitions of weight $n$
 and the set $\mathcal P_m(n)$ of $m$-reduced partitions of weight $n$
  have the same cardinality.
 \end{lemma}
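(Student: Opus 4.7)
The plan is to establish the equality of cardinalities by comparing the generating functions of the two sets, which is the quickest route; I will also indicate the natural bijective interpretation underlying the identity for completeness.

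First I would translate each counting problem into an infinite product. For the $m$-regular partitions the generating function is
\begin{equation*}
\sum_{n\geq 0}|\mathcal P^{(m)}(n)|\,q^n=\prod_{\substack{i\geq 1\\ m\nmid i}}\frac{1}{1-q^i},
\end{equation*}
since each part $i$ with $m\nmid i$ contributes any multiplicity $\geq 0$. For the $m$-reduced partitions, each part $i\geq 1$ is constrained to have multiplicity in $\{0,1,\dots,m-1\}$, yielding
\begin{equation*}
\sum_{n\geq 0}|\mathcal P_m(n)|\,q^n=\prod_{i\geq 1}(1+q^i+q^{2i}+\cdots+q^{(m-1)i})=\prod_{i\geq 1}\frac{1-q^{mi}}{1-q^i}.
\end{equation*}

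Next I would simplify the $m$-regular generating function by separating the factor $\prod_{i\geq 1}(1-q^i)^{-1}$ and removing the parts divisible by $m$:
\begin{equation*}
\prod_{\substack{i\geq 1\\ m\nmid i}}\frac{1}{1-q^i}=\prod_{i\geq 1}\frac{1}{1-q^i}\cdot\prod_{i\geq 1}(1-q^{mi})=\prod_{i\geq 1}\frac{1-q^{mi}}{1-q^i}.
\end{equation*}
Comparing coefficients of $q^n$ on both sides then yields $|\mathcal P^{(m)}(n)|=|\mathcal P_m(n)|$.

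There is no real obstacle here: this is the classical Glaisher generalization of Euler's theorem (the case $m=2$ recovers odd partitions $=$ distinct partitions). If a bijective proof is preferred, one can encode it by base-$m$ expansion of multiplicities: given $\lambda\in\mathcal P^{(m)}(n)$, write $m_i(\lambda)=\sum_{k\geq 0}a_{i,k}m^k$ with $0\leq a_{i,k}<m$ and form $\mu\in\mathcal P_m(n)$ by setting $m_{im^k}(\mu)=a_{i,k}$; the inverse factors each part $j$ of an $m$-reduced partition uniquely as $j=im^k$ with $m\nmid i$ and reassembles the $m$-regular partition. Either presentation suffices, but the generating function argument is the most concise and will be the approach I follow.
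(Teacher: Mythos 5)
Your proof is correct. Note that the paper states this lemma without any proof at all (it is the classical Glaisher generalization of Euler's odd--distinct theorem), so there is nothing to compare against: your generating-function computation
\[
\prod_{\substack{i\geq 1\\ m\nmid i}}\frac{1}{1-q^i}=\prod_{i\geq 1}\frac{1-q^{mi}}{1-q^i}=\prod_{i\geq 1}\bigl(1+q^i+\cdots+q^{(m-1)i}\bigr)
\]
is exactly the standard argument one would supply, and the base-$m$ bijection you sketch (using the unique factorization $j=im^k$ with $m\nmid i$) is the standard bijective counterpart; both are sound.
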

The ring $\Lambda$ of symmetric functions in the variables $x_1, x_2, x_3, \ldots,$ is a $\mathbb Z$-module
with basis $m_{\la}$, $\la\in\mathcal{P}$, where for a partition $\la=(\la_1,\cdots,\la_s)$, the monomial function $m_\la=m_\la(x)=\sum_{\alpha}x_1^{\alpha_1}x_2^{\alpha_2}\cdots$, where $\alpha$ runs over
distinct compositions of $\lambda=(\alpha_1, \cdots, \alpha_l, 0, \cdots)$. When the partition has only one part, i.e. $\la=(n)$, we denote
 $p_n=m_{(n)}=x_1^n+x_2^n+\cdots$ and call it the $n$th power sum symmetric function. For convenience,
we define $p_0=1$ and $p_n=0$ when $n<0$. The power sum symmetric functions $p_\la=p_{\la_1}p_{\la_2}\cdots$
form another basis of $\Lambda_F=\Lambda\otimes_\mathbb{Z}F$ for a field $F$ of characteristic $0$. The vector space $\Lambda_F$ is also a commutative algebra over $F$ with the natural multiplication defined by $p_\la p_\mu=p_{\la\cup\mu}$. It is a graded algebra with degree given by $\deg(p_i)=i$, $i\geq0$.

\section{A generalized Newton's formula and its refinement}

 Let $\mathcal{A}$ be a commutative $F$-algebra with the unit $1$.
Let $\{q_n\}_{n\geq 0}$ be a sequence in $\mathcal{A}$ with $q_0=1$, we
 define $q_{\lambda}=q_{\la_1}q_{\la_2}\dotsm q_{\la_s}$
 for any partition $\la=(\la_1,\dotsc,\la_s)$. For convenience, we set $q_n=0$ for $n<0$.

 The following theorem generalizes the Newton identity (see Remark 3.3 of Corollary 3.2 in \cite{CJ4}).

\begin{theorem}\cite{CJ4}\label{T:Newtongeneralization}
 Assume $\{R_n\}_{n\geq 0}$ is another sequence of $\mathcal A$
 such that $\sum_{i\geq1}R_iq_{n-i}=d_nq_n$ with $d_n\in F$ for each $n\geq1$ and $R_0=1$.
 Then for any partition $\la=(\la_1,\dotsc,\la_s)$ of length $s$, we have
 \begin{equation}\label{F:Traisesq}
 \sum_{i_1,\dotsc,i_s\geq 1}R_{i_1+\dotsm+i_s}q_{\la_1-i_1}\cdots q_{\la_s-i_s}=\sum_{\mu\geq\la}d_{\la\mu}q_\mu,
 \end{equation}
where $d_{\la\mu}\in F$. In particular $d_{\la\la}=(-1)^{s-1}d_{\la_s}$.
\end{theorem}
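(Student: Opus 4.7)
The plan is induction on the length $s = \ell(\la)$. For $s=1$ the claim is exactly the hypothesis: $A_{(\la_1)} = \sum_{i\geq 1} R_i q_{\la_1-i} = d_{\la_1} q_{\la_1}$, so $d_{\la\la} = d_{\la_1} = (-1)^0 d_{\la_s}$.

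For $s \geq 2$, I would first reparametrize with $n_k := i_k + i_{k+1} + \cdots + i_s$ (setting $n_{s+1} := 0$), which bijects $\{(i_1,\ldots,i_s) : i_k \geq 1\}$ with strictly decreasing chains $n_1 > n_2 > \cdots > n_s \geq 1$ and rewrites the left side of~\eqref{F:Traisesq} as
\[
\sum_{n_1 > n_2 > \cdots > n_s \geq 1} R_{n_1}\prod_{k=1}^s q_{\la_k + n_{k+1} - n_k}.
\]
Applying the hypothesis to the outermost variable,
\[
\sum_{n_1 > n_2} R_{n_1}\, q_{\la_1+n_2-n_1} = d_{\la_1+n_2}\, q_{\la_1+n_2} - \sum_{n_1=1}^{n_2} R_{n_1}\, q_{\la_1+n_2-n_1},
\]
splits $A_\la$ into a \emph{leading} piece whose monomials $q_\mu$ satisfy $\mu_1 \geq \la_1+1$ (hence $\mu > \la$ in dominance), plus a \emph{correction} carrying a $-1$. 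Iterating this maneuver by pushing the surviving $R$-factor step by step down the chain---each step contributing another $-1$---eventually reduces after $s-1$ peelings to the single-variable sum $\sum_{n_s\geq 1} R_{n_s} q_{\la_s-n_s} = d_{\la_s} q_{\la_s}$, producing the diagonal $q_\la$ contribution with coefficient $(-1)^{s-1} d_{\la_s}$.

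The main obstacle will be that a naive one-step peeling of the correction reproduces $A_\la$ itself, yielding a tautology; to make the iteration genuinely reductive, I would either peel on pairs of adjacent chain variables simultaneously or augment the strict-chain region with an auxiliary parameter that absorbs the complementary part, so that each successive correction is of the same form on a shorter sub-partition $(\la_2,\ldots,\la_s)$ (allowing the inductive hypothesis to close the argument). Dominance triangularity $\mu \geq \la$ is preserved throughout because every hypothesis application merges $R_{n_k}$ with $q_{\la_k + n_{k+1} - n_k}$ into $d_{\la_k+n_{k+1}} q_{\la_k+n_{k+1}}$, whose index strictly exceeds $\la_k$. The diagonal $q_\la$ monomial then arises exclusively from the final, bottom-most step, where the residual summation over $n_s$ yields $d_{\la_s} q_{\la_s}$, explaining both the sign $(-1)^{s-1}$ and the appearance of $d_{\la_s}$ rather than any other $d_{\la_k}$.
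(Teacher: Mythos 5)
Your proposal sets up the right first moves (the base case, the chain reparametrization $n_k=i_k+\cdots+i_s$, and the splitting $\sum_{n_1>n_2}R_{n_1}q_{\la_1+n_2-n_1}=d_{\la_1+n_2}q_{\la_1+n_2}-\sum_{n_1=1}^{n_2}R_{n_1}q_{\la_1+n_2-n_1}$), but the core of the argument is missing, and you say so yourself: the naive iteration of this peeling is circular, and the two remedies you mention (``peel on pairs of adjacent chain variables'' or ``augment the strict-chain region with an auxiliary parameter'') are named but not carried out. Since the entire content of the theorem lives in closing that recursion, what you have is a plan with the decisive step left open. Note also that the paper itself does not prove this theorem (it is quoted from [CJ4]), so the statement really does require a self-contained argument of this kind.

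There is a second, more substantive problem: your justification of triangularity is not valid as stated. From ``$\mu_1\geq\la_1+1$'' one cannot conclude $\mu>\la$ in dominance (e.g. $(5,1,1,1)\not\geq(4,4)$); for the leading piece this conclusion happens to hold, but only via the telescoping partial sums $\la_1+\cdots+\la_j+n_{j+1}\geq\la_1+\cdots+\la_j$, which you do not invoke. More importantly, the correction terms are \emph{not} termwise triangular, so no argument of the form ``each application of the hypothesis only raises indices'' can work: in the classical specialization $q_n=h_n$, $R_n=p_n$, $d_n=n$ with $\la=(2,2)$, the correction sum contains the monomial $h_2h_1^2$ (from $R_1q_2q_1$ and from the expansion of $R_2q_2$), which does not dominate $(2,2)$ and only disappears through cancellation. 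Any correct proof must account for such cancellations. For what it is worth, your route can be completed at least for $s=2$: swapping the order of summation in the correction, $\sum_{1\leq n_1\leq n_2}R_{n_1}q_{\la_1+n_2-n_1}q_{\la_2-n_2}=\sum_{k\geq0}q_{\la_1+k}\sum_{j\geq1}R_jq_{(\la_2-k)-j}=\sum_{k=0}^{\la_2-1}d_{\la_2-k}\,q_{\la_1+k}q_{\la_2-k}$, which is manifestly triangular with diagonal term $d_{\la_2}q_\la$, giving $d_{\la\la}=-d_{\la_2}$. Extending this reorganization to general $s$ (so that each correction becomes an identity of the same shape on a shorter chain, with the hypothesis applied against the \emph{next} part rather than the same one) is exactly the missing induction step; until it is written down, together with a triangularity argument that survives the cancellations, the proof is not complete.
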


We can give a combinatorial description of the coefficient $d_{\la\mu}$.
\begin{lemma}
For two partitions $\la=(\la_1,\dotsc,\la_s)$ of length $s$
and $\nu=(\nu_1,\dotsc,\nu_t)$ of length $t$, let $N_l(\la,\nu)$ be the cardinality
of the set
\begin{equation}\label{E:i1i2is}
\{(i_1,\dotsc,i_s)|i_1,\dotsc,i_s\geq1 \text{ and }
q_{\la_1-i_1}\dotsm q_{\la_s-i_s}=q_\nu\}.
\end{equation}
Then one has that
\begin{equation}
m(\nu)!N_l(\la,\nu)=k_1(k_2-1)\dotsm (k_t-(t-1)),
\end{equation} where $k_i$ is the number of $j$'s such that $\la_j>\nu_i$.
Moreover $N_l(\la,\nu)$ is also given by the following formula:
\begin{align}\label{F:Nllamu}
m(\nu)!N_l(\la,\nu)=\prod_{\begin{subarray}{r}{(i,k):i\geq1,}\\ {1\leq k\leq m_i(\nu)}\end{subarray}}\Big(1-k+\sum_{j\geq
i+1}\big(m_j(\la)-m_j(\nu)\big)\Big).
\end{align}
\end{lemma}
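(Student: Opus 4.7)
The plan is to interpret $N_l(\la,\nu)$ combinatorially and then count the relevant tuples in two stages.

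First, I would observe that a tuple $(i_1,\dots,i_s)$ with all $i_j\geq 1$ contributes to $N_l(\la,\nu)$ precisely when, among the indices $\la_j-i_j$ (all of which must be nonnegative, otherwise the product vanishes since $q_n=0$ for $n<0$), the nonzero ones form, as a multiset, the parts of $\nu$, and the remaining $s-t$ values are zero. Such a tuple is therefore the same data as a subset $J\subseteq\{1,\dots,s\}$ with $|J|=t$ together with a bijection $\phi\colon J\to\{1,\dots,t\}$ satisfying $\la_j>\nu_{\phi(j)}$ for every $j\in J$, the tuple being recovered by $i_j=\la_j-\nu_{\phi(j)}$ for $j\in J$ and $i_j=\la_j$ otherwise.

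Next, fix the standard decreasing order $\nu_1\geq\nu_2\geq\dots\geq\nu_t$ and count pairs $(J,\phi)$ by selecting the preimages $\phi^{-1}(1),\phi^{-1}(2),\dots,\phi^{-1}(t)$ one at a time. For $\phi^{-1}(1)$ there are $k_1$ admissible indices in $\{1,\dots,s\}$. At step $i$, the admissible set (those $j$ with $\la_j>\nu_i$) has size $k_i$, and because $\nu_i\leq\nu_{i-1}\leq\dots\leq\nu_1$ the previously chosen $i-1$ indices are automatically admissible, so $k_i-(i-1)$ choices remain. Multiplying gives $k_1(k_2-1)\cdots(k_t-(t-1))$ pairs $(J,\phi)$. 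The map $(J,\phi)\mapsto(i_1,\dots,i_s)$ is $m(\nu)!$-to-one: the resulting tuple depends on $\phi$ only through the composition $j\mapsto\nu_{\phi(j)}$, and exactly $\prod_{i\geq 1}m_i(\nu)!=m(\nu)!$ permutations of $\{1,\dots,t\}$ preserve this composition, namely the permutations inside each block of positions of $\nu$ with equal value. Dividing yields the first formula.

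To pass to \eqref{F:Nllamu}, I would regroup the product $\prod_{i=1}^t(k_i-(i-1))$ by the distinct values occurring in $\nu$. For a value $v$ with $m_v(\nu)>0$, the positions $i$ with $\nu_i=v$ form a block of $m_v(\nu)$ consecutive indices on which $k_i$ is the constant $\sum_{j\geq v+1}m_j(\la)$, while $i-1$ traverses $\sum_{j\geq v+1}m_j(\nu)+\ell$ for $\ell=0,\dots,m_v(\nu)-1$. Setting $k=\ell+1$ turns each block into
\[
\prod_{k=1}^{m_v(\nu)}\Bigl(1-k+\sum_{j\geq v+1}\bigl(m_j(\la)-m_j(\nu)\bigr)\Bigr),
\]
and taking the product over all $v\geq 1$ (the factors for $v$ with $m_v(\nu)=0$ being empty) gives \eqref{F:Nllamu} exactly.

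The main obstacle is isolating the overcounting factor in the passage from $(J,\phi)$ to tuples $(i_1,\dots,i_s)$; once one recognizes that only permutations within blocks of equal $\nu$-values preserve the tuple, the factor $m(\nu)!$ emerges cleanly and both formulas follow. A minor point worth verifying is that when some factor $k_i-(i-1)$ is non-positive, the counting argument still produces the correct value $0$, because the monotonicity $k_{i-1}\leq k_i$ forces any earlier factor to already equal $0$.
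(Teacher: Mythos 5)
Your proof is correct and follows essentially the same route as the paper: count matchings between positions of $\nu$ and indices of $\la$ sequentially to get the falling product $k_1(k_2-1)\dotsm(k_t-(t-1))$, account for the $m(\nu)!$ overcount coming from permuting equal parts of $\nu$, and then regroup the factors by the distinct values of $\nu$ to obtain \eqref{F:Nllamu}. The only (cosmetic) difference is that by matching only the $t$ positive parts of $\nu$ and setting $i_j=\la_j$ elsewhere, you avoid the paper's separate treatment of the case $s>t$ (where it appends a block of zero parts and cancels an extra $(s-t)!$).
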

\begin{proof}
For the first statement, we only need to consider the case that $s\geq t$. Otherwise $N_l(\la,\nu)=0$ and it is easily seen that the statement is true.
For convenience, we write $\nu$ in the following form:
\begin{equation}
\nu=(a_1a_1\dotsm a_1 a_2 a_2 \dotsm a_2\dotsm )=(a_1^{m_1}a_2^{m_2}\dotsm a_r^{m_r}),
\end{equation}
where $a_1>a_2>\dotsm>a_{r-1}>a_r\geq0$ and $m_1+\dotsm+m_r=s$. Thus if $s>t$, then $a_r=0$ and $m_r=s-t$.
Now let $S$ be the set of partitions $\mathcal{M}=(M_1,\dotsc,M_r)$ of the set $\{1,\dotsc,s\}$ such that for $d=1,\dotsc,r$
\begin{enumerate}
  \item the cardinality of $M_d$ is $m_d$; i.e. $|M_d|=m_d$,
  \item $j\in M_d\Rightarrow \la_j>a_d.$
\end{enumerate}

We want to define a bijection $\phi$ from $S$ to the set (\ref{E:i1i2is}). For $\mathcal{M}=(M_1,\dotsc, M_r)\in S$, we define a $\phi(\mathcal{M})=(i_1,\dotsc,i_s)$ by letting $i_j=\la_j-a_d$ for $j\in M_d$ . By definition, $\la_j-i_j=a_d$ if and only if $j\in M_d$ (as $a_1,\dotsc,a_r$ are distinct). This is equivalent to the equality $q_{\la_1-i_1}\dotsm q_{\la_s-i_s}=q_\nu$ (as $\nu=(a_1^{m_1}a_2^{m_2}\dotsm a_r^{m_r})$ and $|M_d|=m_d$). We thus see that $\phi$ is really a bijection.

Hence we only need to find the cardinality of $S$. We consider the set $\widetilde{S}$ of permutations of
$\al=(j_1,j_2,\dotsc,j_{m_1},j_{m_1+1},j_{m_1+2},\dotsc,j_{m_1+m_2},\dotsc)$ of $\{1,\dotsc,s\}$ such that $\la_{j_d}>a_1$ for $d=1,\dotsc,m_1$ and $\la_{j_d}>a_2$ for $d=m_1+1,\dotsc,m_1+m_2$ and so on. We see that $|\tilde{S}|=m_1!\dots m_r!|S|$ and thus
\begin{equation}\label{E:NlanuWidetildeS}
N_l(\la,\nu)=|S|=\frac{|\widetilde{S}|}{m_1!\dotsm m_r!}.
\end{equation}

Now we compute the cardinality of $\widetilde{S}$. To obtain a $\al=(j_1,j_2,\dotsc)\in \widetilde{S}$, we first have $k_1$ choices for $j_1$. After $j_1$ is taken, we have $k_2-1$ choices for $j_2$ and so on. We see that $$|\widetilde{S}|=k_1(k_2-1)\dotsm (k_t-(t-1))(k_{t+1}-t)\dotsm (k_s-(s-1)).$$
Combining with expression (\ref{E:NlanuWidetildeS}), we get formula (\ref{F:Nllamu}) in the case that $s=t$. If $s>t$, then $m_r=s-t$, $m(\nu)!=m_1!\dotsm m_{r-1}!$ and $k_{t+1}=\dotsm=k_s=s$ (as $\nu_{t+1}=\dotsm=\nu_s=0$). So $(k_{t+1}-t)\dotsm (k_s-(s-1))=m_r!$ and we also obtain formula (\ref{F:Nllamu}). We finish proving the first statement.

The second expression comes from the first one. For each $i\geq1$, there is a string of $m_i(\nu)$ $i$'s in $\nu$. For the $k$th $i$ in this string, we denote its position in $\nu$ by $g_\nu(i,k)$ (so in particular $\nu_{g_\nu(i,k)}=i$). We can rewrite the right side of (\ref{F:Nllamu}) in the following form:
\begin{equation}\label{E:expression1}
\prod_{\begin{subarray}{r}{\quad i\geq1;}\\ {1\leq k\leq m_i(\nu)}\end{subarray}} \big(f_{\la}(i)-(g_{\nu}(i,k)-1)\big),
\end{equation}
where $f_{\la}(i)$ is the number of $j$'s such that $\la_j>i$. We see that $f_{\la}(i)=\sum_{j>i}m_j(\la)$, and also $g_{\nu}(i,k)=k+\sum_{j>i}m_j(\nu)$. Plugging these into (\ref{E:expression1}), we finish the proof of (\ref{F:Nllamu}).
\end{proof}

\begin{lemma}\label{L:expansionRn} Under the same hypothesis
of Theorem \ref{T:Newtongeneralization}
we have $R_n=\sum_{\mu\vdash n}d_\mu q_\mu$
where
\begin{equation}\label{F:dmu}
d_\mu=(-1)^{l(\mu)-1}\frac{(l(\mu)-1)!}{m(\mu)!}\sum_k
m_{k}(\mu)d_k.
\end{equation}
\end{lemma}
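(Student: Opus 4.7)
The plan is induction on $n$. The hypothesis, with $q_0 = 1$, gives the recursion
\begin{equation*}
R_n = d_n q_n - \sum_{i=1}^{n-1} R_i q_{n-i}
\end{equation*}
by isolating the $i = n$ term. The base case $n = 1$ reads $R_1 = d_1 q_1$, matching $d_{(1)} = d_1$ from \eqref{F:dmu}.

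For the inductive step, assume $R_i = \sum_{\nu \vdash i} d_\nu q_\nu$ for all $i < n$. Substituting,
\begin{equation*}
R_n = d_n q_{(n)} - \sum_{i=1}^{n-1} \sum_{\nu \vdash i} d_\nu \, q_{\nu \cup (n-i)}.
\end{equation*}
For a fixed $\mu \vdash n$ with $l(\mu) \geq 2$, the product $q_{\nu \cup (n-i)}$ equals $q_\mu$ exactly when $n - i$ is a part of $\mu$ and $\nu = \mu \setminus (n-i)$, giving one contribution for each \emph{distinct} part $k$ of $\mu$. Hence the desired expansion $R_n = \sum_{\mu \vdash n} d_\mu q_\mu$ reduces to the combinatorial identity
\begin{equation*}
d_\mu + \sum_{k \text{ distinct part of } \mu} d_{\mu \setminus (k)} = 0, \qquad l(\mu) \geq 2,
\end{equation*}
together with the already-checked coefficient $d_n$ of $q_{(n)}$.

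The main obstacle is verifying this identity directly from \eqref{F:dmu}. Let $a_1 > \cdots > a_r$ be the distinct parts of $\mu$ with multiplicities $m_j := m_{a_j}(\mu)$, set $l = l(\mu) = \sum_j m_j$, $M = \prod_j m_j!$, and $D = \sum_j m_j d_{a_j}$, so that $d_\mu = (-1)^{l-1}(l-1)! \, D / M$. Removing one copy of $a_j$ changes $l \mapsto l-1$, $M \mapsto M/m_j$, and $D \mapsto D - d_{a_j}$, yielding
\begin{equation*}
d_{\mu \setminus (a_j)} = (-1)^{l-2} \frac{(l-2)! \, m_j}{M} (D - d_{a_j}).
\end{equation*}
Summing over $j$ and using $\sum_j m_j = l$,
\begin{equation*}
\sum_{j=1}^{r} d_{\mu \setminus (a_j)} = \frac{(-1)^{l-2}(l-2)!}{M} \bigl(l D - D\bigr) = \frac{(-1)^{l-2}(l-1)!}{M} D = -d_\mu,
\end{equation*}
which is the required identity and closes the induction.
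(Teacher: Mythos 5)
Your proof is correct and follows essentially the same route as the paper: both expand the recursion $R_n=d_nq_n-\sum_{i=1}^{n-1}R_iq_{n-i}$, reduce the claim to the coefficient recursion $d_\mu=-\sum_k d_{\mu\setminus(k)}$ over the distinct parts $k$ of $\mu$, and verify that the closed formula (\ref{F:dmu}) satisfies it by the identical algebraic computation. The only cosmetic difference is that you induct on the weight $n$ while the paper inducts on the length of $\mu$.
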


\begin{proof}
By assumption $q_n+R_1q_{n-1}+R_2q_{n-2}+\cdots+R_{n-1}q_1+R_n=d_nq_n$,
so $R_n=d_nq_n-R_1q_{n-1}-R_2q_{n-2}-\cdots-R_{n-1}q_1$.
Plugging $R_i$ $(i=n-1, \cdots, 1)$ into this equation iteratively, we can write $R_n=\sum_{\mu\vdash
n}d_\mu q_\mu$ with $d_\mu\in F$.

We now show by induction that
$d_\mu$ are given by (\ref{F:dmu}). First when
$\mu$ has only one part, i.e. $\mu=(|\mu|)$, then
$d_\mu=d_{|\mu|}$ so (\ref{F:dmu}) is true. Let $\mu=(1^{m_1}2^{m_2}\dotsm)$ have at least two parts. For
each $m_i\geq1$ set $\mu^i=(1^{m_1}2^{m_2}\cdots
i^{m_i-1}\dotsm)$. Then $d_\mu$ is
$\sum_i -d_{\mu^i}$, where the sum runs over all $i$ with $m_i\geq1$.
Note that $l(\mu^i)=l(\mu)-1$, by induction hypothesis we have
\begin{equation}\label{F:dmui}
d_{\mu^i}=(-1)^{l(\mu)-2}\frac{(l(\mu)-2)!}{m(\mu)!}m_i\big(-d_i+\sum_k
m_{k}(\mu)d_k\big).
\end{equation}
Note that if $m_i=0$, the right side of (\ref{F:dmui}) is zero. Therefore
we have
\begin{align*}
d_\mu&=\sum_i -d_{\mu^i}\\
     &=-(-1)^{l(\mu)-2}\frac{(l(\mu)-2)!}{m(\mu)!}\sum_i m_i\big(-d_i+\sum_k m_k d_k\big)\\
     &=(-1)^{l(\mu)-1}\frac{(l(\mu)-2)!}{m(\mu)!}\big(-\sum_i m_id_i+l(\mu)\sum_k m_k d_k\big)\\
     &=(-1)^{l(\mu)-1}\frac{(l(\mu)-1)!}{m(\mu)!}\sum_k m_k d_k.
\end{align*}
\end{proof}

Now the numbers $d_{\la\mu}$ can be computed as follows.
\begin{corollary}\label{C:dlambdamu}
 The coefficient $d_{\la\mu}$ in Theorem \ref{T:Newtongeneralization} can be expressed as
 \begin{align}
 d_{\la\mu}=\sum_{\nu\subset'\mu}N_l(\la,\nu) d_{\mu\backslash\nu},
 \end{align}
 where $N_l(\la,\nu)$ and $d_\mu$ are defined by Equations (\ref{F:Nllamu}) and (\ref{F:dmu}) respectively.
\end{corollary}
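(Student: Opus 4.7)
The plan is to directly transform the left-hand side of (\ref{F:Traisesq}) by two reorganizations: first group the summation indices by the resulting monomial $q_\nu$, and then expand $R_n$ using Lemma \ref{L:expansionRn}. Comparing coefficients with the right-hand side of (\ref{F:Traisesq}) produces the claimed formula.

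First I would partition the summation domain of the LHS of (\ref{F:Traisesq}) according to the value of the product $q_{\la_1-i_1}\cdots q_{\la_s-i_s}$. By the very definition of $N_l(\la,\nu)$ in (\ref{E:i1i2is}), the number of tuples $(i_1,\dotsc,i_s)$ with all $i_j\geq 1$ that produce a given $q_\nu$ is exactly $N_l(\la,\nu)$, and for any such tuple $i_1+\dotsm+i_s=|\la|-|\nu|$ depends only on $\nu$. Hence
\begin{equation*}
\sum_{i_1,\dotsc,i_s\geq 1}R_{i_1+\dotsm+i_s}q_{\la_1-i_1}\cdots q_{\la_s-i_s}
= \sum_{\nu} N_l(\la,\nu)\, R_{|\la|-|\nu|}\, q_\nu.
\end{equation*}

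Next, I would substitute the expansion $R_n=\sum_{\sigma\vdash n} d_\sigma q_\sigma$ from Lemma \ref{L:expansionRn} (using $R_0=1$ when $|\nu|=|\la|$) into the previous expression and rearrange by the total partition $\mu=\sigma\cup\nu$. Since $\nu\subset'\mu$ with $\sigma=\mu\backslash\nu$ and $q_\sigma q_\nu=q_\mu$, this yields
\begin{equation*}
\sum_{i_1,\dotsc,i_s\geq 1}R_{i_1+\dotsm+i_s}q_{\la_1-i_1}\cdots q_{\la_s-i_s}
= \sum_\mu q_\mu \sum_{\nu\subset'\mu} N_l(\la,\nu)\, d_{\mu\backslash\nu}.
\end{equation*}
Matching this against the expansion $\sum_{\mu\geq\la}d_{\la\mu}q_\mu$ from Theorem \ref{T:Newtongeneralization} gives the corollary.

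The only delicate point is the final coefficient comparison: in a general commutative algebra $\mathcal A$ the $q_\mu$ need not be linearly independent. However, the identity is universal since the whole computation is formal in the symbols $q_n$ and $R_n$; equivalently, one may perform the derivation in the polynomial algebra $F[q_1,q_2,\dotsc]$, where the $q_\mu$ form a basis, and then specialize. Under this reading there is no real obstacle, and the proof is essentially bookkeeping: the two-step reorganization of the LHS reconstructs precisely the $d_{\la\mu}$ defined implicitly by Theorem \ref{T:Newtongeneralization}.
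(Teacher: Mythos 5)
Your argument is correct and is exactly the derivation the paper intends (the corollary is stated without proof there): group the tuples in the left side of (\ref{F:Traisesq}) by the resulting monomial $q_\nu$ using $N_l(\la,\nu)$, expand $R_{|\la|-|\nu|}$ via Lemma \ref{L:expansionRn}, and collect terms with $\mu=\nu\cup(\mu\backslash\nu)$. Your closing remark about passing to the free polynomial algebra $F[q_1,q_2,\dotsc]$ to justify the coefficient comparison is a sensible way to handle the only genuine subtlety, so there is nothing to add.
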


\section{A generalized modular Hall-Littlewood function}

Let $\ep=(\ep_1, \ep_2, \dotsc)$ be a sequence of (finite or infinite) non-zero parameters.
Let $\mathbb{Q}(\ep)=\mathbb{Q}(\epsilon_1,\epsilon_2,\cdots)$ be the field of rational functions in $\ep_1,\ep_2,\dotsc$ over $\mathbb{Q}$.
 The ring $\Lambda_{\mathbb Q}=\Lambda\otimes \mathbb Q$ is a free commutative algebra generated by the
 power-sum symmetric polynomials $p_1,p_2,\dotsc$ over $\mathbb{Q}$, thus the power sums symmetric functions $p_\la=p_{\la_1}p_{\la_2}\dotsm$, $\la\in\mathcal{P}$, form a $\mathbb Q$-linear basis.
By abuse of notation, we let $\Lambda(\ep)$ denote the ring
$\Lambda\otimes\mathbb{Q}(\ep)$. We equip $\Lambda(\ep)$
with the following scalar product \cite{Ke}:
\begin{align} \label{def}
\langle p_{\la}, p_{\mu}\rangle=\delta_{\la \mu}\epsilon_\la
z_\lambda,
\end{align}
where $\delta$ is the Kronecker symbol, $\epsilon_\la=\epsilon_{\la_1}\epsilon_{\la_2}\ldots$ with $\epsilon_0=1$, and $$z_\la=\prod_{i\geq1}i^{m_i(\la)}m_i(\la)!.$$

We are interested in the subalgebra $\Lambda^{(m)}(\epsilon)$ generated by $p_n$'s with $m\nmid n$,
 i.e. $\Lambda^{(m)}(\epsilon)$
 has a linear basis spanned by $p_{\lambda}$, $\lambda\in\mathcal P^{(m)}$.
 This subalgebra is a graded algebra with the natural degree gradation:
\begin{equation}
\Lambda^{(m)}(\epsilon)=\bigoplus_{n=0}^{\infty} \Lambda^{(m)}_n(\epsilon),
\end{equation}
where $\Lambda^{(m)}_n(\epsilon)$ is the subspace of homogeneous symmetric functions
of degree $n$.

For each $\la\in\mathcal{P}$, we define the generalized complete symmetric functions $q_\la=q_{\la_1} q_{\la_2}\cdots$, for which $q_n$ are given by:
\begin{equation}\label{E:generating}
Y_m(z)=\exp\Big(\sum_{m\nmid n\geq1}
\frac{z^n}{n\epsilon_n}p_n\Big)=\sum_{n=0}^{\infty} q_{n} z^n.
\end{equation}
Thus $q_\la$ is homogeneous of degree $|\la|$,
$q_0=1$ and for $n>0$ one has
\begin{equation}
q_{n}=\sum_{\la\in\mathcal P^{(m)}(n)}\frac{p_\la}{z_\la\ep_\la}\in\Lambda^{(m)}_n(\epsilon).
\end{equation}

The following elementary result generalizes well-known bases in $\Lambda$.
\begin{lemma}\label{L:3bases}
The following three sets are all bases of $\Lambda^{(m)}_n(\epsilon)$:
\begin{align}\label{B:p^m}
 &\{p_\la : \la\in\mathcal P^{(m)}(n)\}\\ \label{B:q^m}
&\{q_\la : \la\in\mathcal P^{(m)}(n)\}\\ \label{B:q_m}
 &\{q_\la : \la\in \mathcal P_m(n)\}. 
 \end{align}
\end{lemma}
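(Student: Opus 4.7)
The plan is to address the three claims in order. Part (3.1) is immediate from the definition: $\Lambda^{(m)}(\ep)$ is by construction the polynomial subalgebra of $\Lambda(\ep)$ freely generated by the $p_n$ with $m\nmid n$, so $\{p_\la:\la\in\mathcal P^{(m)}(n)\}$ is a basis of its degree-$n$ piece. For (3.2) I would expand the generating function \eqref{E:generating} to obtain, for $m\nmid n$,
\[
q_n=\frac{p_n}{n\ep_n}+(\text{polynomial in }p_i\text{ with }i<n,\ m\nmid i).
\]
Recursively inverting this triangular relation expresses each $p_n$ with $m\nmid n$ as a polynomial in the $q_k$'s with $k\leq n$ and $m\nmid k$, so these $q_n$ are algebraically independent generators of $\Lambda^{(m)}(\ep)$; consequently $\{q_\la:\la\in\mathcal P^{(m)}(n)\}$ is a basis.

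For (3.3), Lemma \ref{L:2typeofpartions} supplies $|\mathcal P_m(n)|=|\mathcal P^{(m)}(n)|=\dim\Lambda^{(m)}_n(\ep)$, so it suffices to show that $\{q_\la:\la\in\mathcal P_m(n)\}$ spans $\Lambda^{(m)}_n(\ep)$. The key new ingredient is the functional equation
\[
\prod_{k=0}^{m-1}Y_m(\xi_m^k z)=1,
\]
which holds because $\sum_{k=0}^{m-1}\xi_m^{kn}$ vanishes for $m\nmid n$ while the exponent of $Y_m$ only involves such $n$. Extracting the coefficient of $z^{mi}$ gives an $F$-linear identity expressing $q_i^m$ as a combination of $q_\nu$ with $|\nu|=mi$ and $\nu\neq(i^m)$, where each such $\nu$ arises from an $m$-tuple $(n_0,\dotsc,n_{m-1})$ of nonnegative integers summing to $mi$ but not all equal to $i$. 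A weight count then forces each such $\nu$ to have some part strictly greater than $i$: otherwise $|\nu|\leq m\cdot i=mi$ with equality forcing $\nu=(i^m)$, a contradiction. In particular $\nu$ strictly dominates $(i^m)$ in the dominance order on $\mathcal P(mi)$.

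Spanning then follows by reverse induction on the dominance order on $\mathcal P(n)$, applied to all $q_\sigma$ with $\sigma\in\mathcal P(n)$. The maximum element $\sigma=(n)$ already lies in $\mathcal P_m(n)$. For the inductive step, either $\sigma\in\mathcal P_m(n)$ and $q_\sigma$ is already in the span, or $\sigma\notin\mathcal P_m(n)$, in which case I pick $i$ with $m_i(\sigma)\geq m$, write $\sigma=\sigma'\cup(i^m)$, and substitute the identity above to express $q_\sigma=\sum_\nu c_\nu q_{\sigma'\cup\nu}$, where each $\sigma'\cup\nu$ strictly dominates $\sigma$ and so falls under the inductive hypothesis. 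Combined with the dimension count this finishes (3.3). The main obstacle is precisely this dominance-termination step, and the crucial combinatorial observation driving it is that every $\nu\neq(i^m)$ appearing in the expansion of $q_i^m$ must carry a part larger than $i$.
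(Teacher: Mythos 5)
Your proposal is correct and, on the essential point (the basis \eqref{B:q_m}), follows the same route as the paper: the functional equation $\prod_{k=0}^{m-1}Y_m(\xi_m^kz)=1$, extraction of the coefficient of $z^{mi}$ to express $q_{(i^m)}=q_i^m$ through $q_\nu$ with $\nu>(i^m)$ (the paper phrases this as $(i^m)$ being dominance-minimal among partitions of $mi$ with at most $m$ parts, which is the precise fact your ``part larger than $i$'' observation needs, and which holds here because only $m$ factors occur), followed by iterated substitution governed by the dominance order and the cardinality count of Lemma \ref{L:2typeofpartions}. The only real deviation is in \eqref{B:q^m}: you invert the triangular relation $q_n=\frac{p_n}{n\ep_n}+(\text{lower terms})$ directly, whereas the paper takes the logarithm of \eqref{E:generating} to expand $p_n$ over \emph{all} partitions and then invokes the companion fact that $q_{km}$ is a combination of $q_\mu$ with $\mu<(km)$; both are valid, and your version is a bit more economical since it never needs that companion fact. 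Note that your step ``each $\sigma'\cup\nu$ strictly dominates $\sigma$'' silently uses that $\tau\mapsto\tau\cup\sigma'$ is monotone for the dominance order (true, by the characterization of partial sums of a union as maxima of split partial sums), a fact the paper also uses implicitly in Remark \ref{R:qasq'}, so this is not a genuine gap.
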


\begin{proof}
First of all (\ref{B:p^m}) defines a basis by definition.
Applying logarithm to both sides of Eq. (\ref{E:generating}), we find that
\begin{equation}\label{E:q2p}
p_n=\sum_{\la\vdash n}n\ep_n(-1)^{l(\la)-1}\frac{(l(\la)-1)!}{m(\la)!}q_\la,
\end{equation}
for each $n$ such that $m\nmid n$. This implies that each $p_\la$ in (\ref{B:p^m}) is a linear combination of $q_\mu$'s (with $|\mu|=|\la|$).
Notice that the three sets have the same cardinality by Lemma \ref{L:2typeofpartions}.
Then the lemma is a consequence of the following facts: for $k\geq 1$, \\
(a) $q_{km}$ is a linear combination of $q_\mu$'s with $\mu<(km)$,\\
(b) $q_{(k^m)}$ is a linear combination of $q_\nu$'s with $m_i(\nu)<m$ and $\nu>(k^m)$.\\
In fact one can iteratively use (a) ((b) respectively) to express $q_\la$ --and thus $p_\la$-- as a linear combination of the elements in (\ref{B:q^m})((\ref{B:q_m}) respectively).

 Now let us turn to the proof of (a) and (b). Recall that $\xi_m$ is an $m$th primitive root of $1$.
 One has
\begin{align}\label{E:Yproperty}
\prod_{i=1}^m\Big(\sum_{n}q_n\xi_m^{in}z^n\Big)&=\prod_{i=1}^{m}Y_m(\xi_m^i z)\\ \nonumber
&=\exp\Big(\sum_{m\nmid n}\frac{(\xi_m^{n}+\xi_m^{2n}\dotsm+\xi_m^{mn})z^n}{n\epsilon_n}p_n\Big)\\ \nonumber
&=1,
\end{align}
where we use the fact that $\sum_{i=1}^m\xi_m^{in}=0$ if $m\nmid n$.
For every positive integer $k$, considering the coefficient of $z^{km}$ in Equation (\ref{E:Yproperty}),
we have
\begin{equation}
mq_{km}+\sum_{\nu}c_\nu q_\nu+(-1)^{(m+1)k}q_{(k^m)}=0,
\end{equation}
where the sum is over $\nu$ such that $1<l(\nu)\leq m$ and $\nu\neq(k^m)$. Note that among all partitions
$\mu$ such that $|\mu|=km$ and $l(\mu)\leq m$, $(km)$ is the largest and $(k^m)$ is the smallest in terms of the dominance order. Thus $q_{km}$ is a linear combination of some $q_\mu$'s with $\mu<(km)$, and $q_{(k^m)}$ is a linear combination of $q_\mu$'s with $m_i(\mu)<m$ and $\mu>(k^m)$. This finishes the proof.
\end{proof}

\begin{remark}\label{R:qasq'}
From the proof we see that $q_\la$ is a linear combination of $q_\mu$'s with $m\nmid\mu\leq\la$, and it is also a linear combination of $q_\mu$'s with $\mu\geq\la$ and $m_i(\mu)<m$.
\end{remark}

We need some linear operators on $\Lambda^{(m)}(\ep)$. For a positive integer $n$ with $m\nmid n$,
 define $h_n, h_{-n}\in \text{End}_{\mathbb{Q}(\ep)}(\Lambda^{(m)}(\ep))$ by:
\begin{align}
h_n\cdot v&=n\epsilon_n\frac{\partial}{\partial p_{n}}v,\\
h_{-n}\cdot v&=p_{n}v,
\end{align} where $v\in \Lambda^{(m)}(\ep)$.

For an operator $A$ on $\Lambda^{(m)}(\ep)$, the conjugate $A^*$ is defined by $\langle A\cdot u,v\rangle=\langle u,A^*\cdot v\rangle$ ( $u,v\in \Lambda^{(m)}(\ep)$).
 We say $A$ is {\it self-adjoint} if $A=A^*$. We say that $A$ is a {\it raising operator}
for the set $\{q_\la\}$ if $A\cdot q_\la$ is of the form $A\cdot q_\la=\sum_{\mu\geq\la}a_{\la\mu}q_\mu$ for all $q_\la$ in this set.

We can easily prove that $h_n^*=h_{-n}$ and $h_i.q_n=q_{n-i}$ for $n\in\mathbb{Z}$ and $m\nmid i>0$.

 \subsection{A generalization of modular Hall-Littlewood functions}

For the algebra $\Lambda^{(m)}(\epsilon)$ (of symmetric functions), we consider
$\ep_n=\frac{q^n-1}{(1-\xi_m^n)c^n}$, where $m\nmid n$ and $q,c$ are two parameters. We define the following vertex operator
 \begin{align}
 X(z)&=\exp\Big(\sum_{m\nmid n\geq1}
\frac{z^{n}h_{-n}}{n}(1-\xi_m^n)c^n\Big)\exp\Big(\sum_{m\nmid n\geq1}
\frac{z^{-n}h_n}{n}(1-\xi_m^n)\Big)\\ \nonumber
&=\sum_n X_{n} z^{-n},
\end{align}
which maps $\Lambda^{(m)}(\ep)$ to $\Lambda^{(m)}(\ep)[[z]]$, the space of formal power series of $\Lambda^{(m)}(\ep)$
in $z$.
We will show that the eigenvectors of $X_0$ form an orthogonal basis of $\Lambda^{(m)}(\ep)$.

\begin{lemma}\label{L:creatingpart}
Define $R_n\in\Lambda^{(m)}(\epsilon)$ by  $$\exp\Big(\sum_{m\nmid n\geq1}
\frac{z^{n}p_{n}}{n}(1-\xi_m^n)c^n\Big)=\sum_{n\geq0}R_nz^n,$$ then we have $$\sum_{i\geq1}R_iq_{n-i}=(q^n-1)q_n.$$
\end{lemma}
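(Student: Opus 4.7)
The plan is to prove the identity by comparing the two exponential generating functions at hand and spotting a substitution that relates them. Introduce
$$B(z)=\sum_{m\nmid n\geq 1}\frac{z^{n}p_{n}}{n\ep_{n}},\qquad A(z)=\sum_{m\nmid n\geq 1}\frac{z^{n}p_{n}(1-\xi_{m}^{n})c^{n}}{n},$$
so that by Equation (\ref{E:generating}) we have $Y_{m}(z)=\exp(B(z))=\sum_{n\geq 0}q_{n}z^{n}$, while by the hypothesis of the lemma $\sum_{n\geq 0}R_{n}z^{n}=\exp(A(z))$. Thus the claim $\sum_{i\geq 1}R_{i}q_{n-i}=(q^{n}-1)q_{n}$ becomes the coefficient-wise form of a single product identity between these two generating series.

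The key step is the identity $A(z)=B(qz)-B(z)$. This is a direct computation: the coefficient of $p_{n}$ in $B(qz)-B(z)$ is $(q^{n}-1)z^{n}/(n\ep_{n})$, and inserting the specific value $\ep_{n}=(q^{n}-1)/((1-\xi_{m}^{n})c^{n})$ collapses this to $z^{n}(1-\xi_{m}^{n})c^{n}/n$, which is exactly the coefficient of $p_{n}$ in $A(z)$. Exponentiating yields the operator-free formal identity
$$Y_{m}(qz)=Y_{m}(z)\cdot\exp(A(z))=\Bigl(\sum_{n\geq 0}q_{n}z^{n}\Bigr)\Bigl(\sum_{n\geq 0}R_{n}z^{n}\Bigr).$$

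To finish, I would simply equate coefficients of $z^{n}$ on both sides. The left side contributes $q^{n}q_{n}$, and the right side contributes $\sum_{i=0}^{n}R_{i}q_{n-i}$. Since $R_{0}=1$ and $q_{0}=1$, isolating the $i=0$ term gives $q^{n}q_{n}=q_{n}+\sum_{i\geq 1}R_{i}q_{n-i}$, which is the desired formula.

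There is essentially no obstacle beyond recognizing the substitution $z\mapsto qz$: the role of the prescribed value of $\ep_{n}$ is precisely to make $A$ a first-order $q$-difference of $B$ in the $z$ variable. All of the other steps are routine manipulations of formal power series with coefficients in the commutative algebra $\Lambda^{(m)}(\ep)$, and no convergence or sign issues arise because the exponential series involve only non-negative powers of $z$.
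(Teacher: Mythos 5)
Your proof is correct and is essentially the same as the paper's: both arguments multiply the generating series $\sum_n R_n z^n$ and $\sum_n q_n z^n$, use the specific value $\ep_n=\frac{q^n-1}{(1-\xi_m^n)c^n}$ to recognize the product as $Y_m(qz)=\sum_n q_nq^nz^n$, and then compare coefficients of $z^n$. Your phrasing of the key step as the $q$-difference identity $A(z)=B(qz)-B(z)$ is just a slight repackaging of the paper's direct combination of the two exponentials.
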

\begin{proof} This follows from the following computation:
\begin{align*}
&\sum_{n\geq0}(\sum_{i\geq0}R_iq_{n-i})w^n=\sum_{i\geq0}R_iw^i\cdot\sum_{j\geq0}q_jw^j\\
&=\exp\Big(\sum_{m\nmid n\geq1}
\frac{w^{n}p_{n}}{n}(1-\xi_m^n)c^n\Big)\exp\Big(\sum_{m\nmid n\geq1}
\frac{w^n p_{n}}{n\epsilon_n}\Big)=\exp\Big(\sum_{m\nmid n\geq1}
\frac{w^np_{n}}{n\epsilon_n}q^n\Big)\\
&=\sum_{n\geq0}q_n(qw)^n.   
\end{align*}
\end{proof}

\begin{theorem}\label{T:target operator}
The operator $X_0$ is self-adjoint and acts on ${q_\la}'s$ as a raising
operator, i.e. $X_0\cdot q_\la=\sum_{\mu\geq\la}c_{\la\mu}q_\mu$.
Moreover the leading coefficient is given by
$$c_{\la\la}=1+(1-\xi_m)\sum_{i=1}^{l(\la)}(q^{\la_i}-1)\xi_m^{i-1}.$$
\end{theorem}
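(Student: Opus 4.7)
My plan is to factor $X(z) = X_+(z)\,X_-(z)$ into its creation and annihilation halves. The factor $X_+(z) = \sum_{n\geq 0} R_n z^n$ acts by multiplication by the elements $R_n \in \Lambda^{(m)}(\ep)$ appearing in Lemma \ref{L:creatingpart}, while $X_-(z) = \exp(\sum_{m\nmid n\geq 1} \frac{z^{-n} h_n}{n}(1-\xi_m^n))$ is the exponential of a commuting family of derivations and is therefore an algebra automorphism of $\Lambda^{(m)}(\ep)$. Applying $X_-(z)$ to the generating series $\sum_k q_k w^k = \exp(\sum_{m\nmid k} \frac{w^k p_k}{k\ep_k})$ --- using $h_n \cdot p_n = n\ep_n$ for $m\nmid n$ --- one picks up the multiplicative factor
$$\exp\Big(\sum_{m\nmid n\geq 1}\tfrac{(w/z)^n(1-\xi_m^n)}{n}\Big) = \frac{1-\xi_m w/z}{1-w/z},$$
giving $X_-(z)q_k = q_k + (1-\xi_m)\sum_{a\geq 1} z^{-a} q_{k-a}$ and hence $X_-(z)q_\la = \prod_{i=1}^{s}\bigl(q_{\la_i}+(1-\xi_m)\sum_{a\geq 1} z^{-a} q_{\la_i-a}\bigr)$.

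Multiplying by $X_+(z)$ and extracting the coefficient of $z^0$, I organize the sum by the subset $J = \{j_1 < \dotsb < j_t\} \subseteq \{1,\dotsc,s\}$ of indices where $a_i > 0$. Writing $\la^J = (\la_{j_1}, \dotsc, \la_{j_t})$ and $\la^{J^c}$ for the subpartition formed by the remaining parts, one obtains
$$X_0 q_\la = q_\la + \sum_{\emptyset\neq J} (1-\xi_m)^{|J|}\, q_{\la^{J^c}}\sum_{i_1,\dotsc,i_t\geq 1} R_{i_1+\dotsb+i_t}\, q_{\la_{j_1}-i_1}\dotsm q_{\la_{j_t}-i_t}.$$
Lemma \ref{L:creatingpart} supplies the hypothesis $\sum_{i\geq 1} R_i q_{n-i} = (q^n-1)q_n$ of Theorem \ref{T:Newtongeneralization} with $d_n = q^n-1$, so each inner sum equals $\sum_{\nu\geq\la^J} d_{\la^J,\nu}\,q_\nu$ with diagonal coefficient $d_{\la^J,\la^J} = (-1)^{t-1}(q^{\la_{j_t}}-1)$. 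Since union with a fixed partition is monotone in dominance, $\la^{J^c}\cup\nu \geq \la^{J^c}\cup\la^J = \la$, and Remark \ref{R:qasq'} then rewrites each $q_{\la^{J^c}\cup\nu}$ in the $m$-reduced basis with all indices $\geq\la$, establishing the raising property. Only the $\nu=\la^J$ terms contribute $q_\la$ itself, yielding $c_{\la\la} = 1 + \sum_{\emptyset\neq J}(-1)^{|J|-1}(1-\xi_m)^{|J|}(q^{\la_{\max J}}-1)$. Grouping by $i = \max J$ gives $\binom{i-1}{t-1}$ subsets of size $t$ with $\max J = i$, and the identity $\sum_{k=0}^{i-1}\binom{i-1}{k}(-(1-\xi_m))^k = \xi_m^{i-1}$ collapses the inner sum to the stated formula $c_{\la\la}=1+(1-\xi_m)\sum_{i=1}^{l(\la)}(q^{\la_i}-1)\xi_m^{i-1}$.

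Self-adjointness I handle by a mirror computation. The key observation is $R_n = c^n\,\tilde R_n$, where $\tilde R_n$ denotes the $c=1$ specialization of $R_n$; this is immediate from $\sum_n R_n z^n = \exp(\sum_{m\nmid k}\frac{(zc)^k p_k(1-\xi_m^k)}{k})$. Since $h_n^* = h_{-n}$ (and $p_n = h_{-n}$), the adjoint is
$$X(z)^* = \exp\Big(\sum_{m\nmid n\geq 1}\tfrac{z^{-n}p_n(1-\xi_m^n)}{n}\Big)\exp\Big(\sum_{m\nmid n\geq 1}\tfrac{z^n h_n(1-\xi_m^n)c^n}{n}\Big).$$
Repeating the first-paragraph computation for this new operator produces the multiplier $\frac{1-\xi_m zwc}{1-zwc}$, and a parallel argument yields a formula for $X_0^* q_\la$ identical to the one above except that every $R_{i_1+\dotsb+i_t}$ is replaced by $c^{i_1+\dotsb+i_t}\tilde R_{i_1+\dotsb+i_t}$. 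The identity $R_n = c^n\tilde R_n$ matches the two formulas term by term, proving $X_0^* = X_0$. The main technical obstacle, I expect, is the bookkeeping of the compound sum over subsets $J$ and compositions $(i_j)_{j\in J}$ in the second step; once that expansion is in place, both the raising property with its leading coefficient and the self-adjointness follow by direct comparison.
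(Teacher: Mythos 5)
Your proposal is correct and follows essentially the same route as the paper: expand $X_0\cdot q_\la$ by letting the annihilation half act as an automorphism (producing the factor $\frac{1-\xi_m w/z}{1-w/z}$), organize the coefficient of $z^0$ by the subset of parts with positive shift, invoke Lemma \ref{L:creatingpart} and Theorem \ref{T:Newtongeneralization} with $d_n=q^n-1$, and finish the diagonal coefficient with the binomial identity; your explicit monotonicity-of-union remark and the grouping by $\max J$ are just more detailed versions of steps the paper leaves implicit. The one genuine addition is your adjoint computation via $R_n=c^n\tilde R_n$ (equivalently $X_n^*=c^{-n}X_{-n}$), which correctly supplies the self-adjointness of $X_0$ that the paper asserts without proof.
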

\begin{remark} \label{r:eigenvalue} Combining this with Remark \ref{R:qasq'}, we can also write
$X_0\cdot q_\la=\sum_{\la\leq\mu\in\mathcal P_m}c'_{\la\mu}q_\mu$.
Moreover $c'_{\la\la}=c_{\la\la}$ if $\la\in \mathcal P_m$. 
\end{remark}
\begin{proof}
Let us first define an operator $Y_n$ on $\Lambda(\ep)$ by:
\begin{equation}\label{E:qgenerating}
Y(w)=\exp\Big(\sum_{m\nmid n\geq1}
\frac{w^n}{n\epsilon_n}h_{-n}\Big)=\sum_n Y_{n} w^n.
\end{equation}
Note that the action of $Y_n$ on $\Lambda(\ep)$ is the multiplication by $q_n$, i.e. $Y_{n}\cdot v=q_{n}v$.

For a partition $\la=(\la_1,\dots,\la_s)$,
$X_0\cdot q_\la$ is the coefficient of $z^0w_1^{\la_1}\cdots w_s^{\la_s}$ in the following expression
\begin{align*}
&X(z)Y(w_1)\cdots Y(w_s)\cdot 1\\
&=\exp\Big(\sum_{m\nmid n\geq1}
\frac{z^{n}h_{-n}}{n}(1-\xi_m^n)c^n\Big)\exp\Big(\sum_{m\nmid n\geq1}
\frac{z^{-n}h_n}{n}(1-\xi_m^n)\Big)\\
&\qquad\qquad\cdot\prod_{i=1}^s \exp\Big(\sum_{m\nmid n\geq1}
\frac{w_i^n}{n\epsilon_n}h_{-n}\Big)\cdot 1\\
&=\sum_{n\geq0}R_nz^n\cdot\prod_{i=1}^s \exp\Big(\sum_{m\nmid n\geq1}
\frac{w_i^n}{n\epsilon_n}h_{-n}\Big)\prod_{i=1}^s \exp\Big(\sum_{m\nmid n\geq1}
\frac{(w_i/z)^{n}}{n}(1-\xi_m^n)\Big)\cdot 1\\
&=\sum_{n\geq0}R_nz^n\cdot\sum_{n_1,\dots,n_s\geq0}q_{n_1}w_1^{n_1}\cdots q_{n_s}w_s^{n_s}\cdot\prod_{i=1}^s\frac{1-\xi_m w_i/z}{1-w_i/z}\cdot
\end{align*}
Note that $\frac{1-\xi_mw_i/z}{1-w_i/z}=\sum_{k\geq0}a_k (w_i/z)^k$, where $a_0=1$ and $a_k=1-\xi_m$ for $k\geq1$. Therefore we have that
\begin{align}\nonumber
X_0\cdot q_\la&=\sum_{i_1,\dots,i_s\geq0}R_{i_1+\cdots+i_s}a_{i_1}q_{\la_1-i_1}\cdots a_{i_s}q_{\la_s-i_s}\\ \label{E:actionofX0}
&=\sum_{\lambda_{\underline{j}}}\frac{(1-\xi_m)^{k}q_{\lambda}}{q_{\lambda_{j_1}}\cdots q_{\lambda_{j_k}}}\sum_{i_1,\dots,i_k\geq1}R_{i_1+\cdots+i_k}q_{\la_{j_1}-i_1}\cdots q_{\la_{j_k}-i_k},
\end{align}
where we have pulled
out $q_{\lambda_j}$ whenever $i_j=0$ and $\lambda_{\underline{j}}=(\lambda_{j_1},\ldots ,\lambda_{j_k})$ runs through any subsequence of $\lambda$. By Lemma \ref{L:creatingpart}, $R_n$ and $q_n$ satisfy the
assumption of Theorem \ref{T:Newtongeneralization} with $d_n=q^n-1$. Using Theorem \ref{T:Newtongeneralization}
we can compute the inside sum on the right of
(\ref{E:actionofX0}) as $\sum_{\mu\geq \lambda_{\underline{j}}}d_{\lambda_{\underline{j}}\mu}q_{\mu}$ with the leading coefficient
$(q^{\lambda_{j_k}}-1)(-1)^{k-1}$. Therefore
\begin{align*}
X_0\cdot q_\la&=\sum_{\lambda_{\underline{j}}}(1-\xi_m)^{k}q_{\lambda\backslash\lambda_{\underline{j}}}\sum_{\mu\geq \lambda_{\underline{j}}}d_{\lambda_{\underline{j}}\mu}q_{\mu}\\
&=\sum_{\lambda_{\underline{j}}}\sum_{\mu\geq \lambda_{\underline{j}}}(1-\xi_m)^{k} d_{\lambda_{\underline{j}}\mu}q_{\mu\cup (\lambda\backslash\lambda_{\underline{j}})}\\
&=\sum_{\mu\geq\lambda}c_{\lambda\mu}q_{\mu}.
\end{align*}
The leading coefficient is
\begin{align*}
&1+\sum_{\lambda_{\underline{j}}\neq \emptyset}(1-\xi_m)^{k} (q^{\lambda_{j_k}}-1)(-1)^{k-1}\\
=&1+\sum_{k=1}^{\l(\lambda)}(1-\xi_m)^{k}\sum_{l(\lambda_{\underline{j}})=k} (q^{\lambda_{j_k}}-1)(-1)^{k-1}
\end{align*}
which is equal to $c_{\lambda\lambda}$ by the binomial formula.
\end{proof}

Note that it is possible that $c_{\la\la}=c_{\mu\mu}$ for $\la\neq\mu$. For example, this happens when $\la=(1^k2^{m+l})$, $\mu=(1^{k+2m}2^l)$. However if we restrict the partitions to those with multiplicities less than $m$, than $\la\neq\mu$ does imply $c_{\la\la}\neq c_{\mu\mu}$. To see this, for $\la=(1^{m_1(\lambda)}2^{m_2(\lambda)}\cdots)$ we can rewrite the \emph{main} part of $c_{\la\la}$:
\begin{align*}
f_\la(q)&=\sum_{i=1}^{l(\la)}(q^{\la_i}-1)\xi_m^{i-1}\\
&=(q-1)\sum_{m_1(\lambda)\geq j\geq 1}\xi_m^{l(\la)-j}+(q^2-1)\sum_{m_2(\lambda)\geq j\geq 1}\xi_m^{l(\la)-m_1(\lambda)-j}\\
&\qquad\qquad+(q^3-1)\sum_{m_3\geq j\geq 1}\xi_m^{l(\la)-m_1(\lambda)-m_2(\lambda)-j}+\cdots.
\end{align*}

If $c_{\la\la}=c_{\mu\mu}$ then $f_\la(q)=f_\mu(q)$.  Then we must have $\sum_{i=1}^{l(\la)}\xi_m^{i-1}=\sum_{i=1}^{l(\mu)}\xi_m^{i-1}$, which leads to $m|l(\la)-l(\mu)$
using the fact that $\sum_{i=1}^r\xi_m^i=0$ if and only if $m|r$. Comparing the coefficients of $q$ in $f_\la(q)$ and $f_\mu(q)$, we should have $m\mid m_1(\la)-m_1(\mu)$.  But $m_1(\la),m_1(\mu)\in[0,m-1]$ thus $m_1(\la)=m_1(\mu)$. Similarly, we can show that $m_i(\la)=m_i(\mu)$ for $i=2,3,\cdots$. Therefore we have $\la=\mu$.

\begin{remark}\label{R:clacmu}
Using the same argument, we can show that $c_{\la\la}=c_{\mu\mu}$ if and only if $m_i(\la)\equiv m_i(\mu)\mod{m}$ for all $i\geq1$. Thus if $c_{\la\la}=c_{\mu\mu}$,  $\la\neq\mu$ and $|\la|=|\mu|$ then both $\la$ and $\mu$ have a multiplicity greater than $m$.
\end{remark}
\begin{corollary} Let $\epsilon_n=\frac{q^n-1}{1-\xi_m^n}c^{-n}$.
Then for each partition $\la\in\mathcal P_m$ (with $m_i(\la)<m$), there is a unique symmetric function $Q_\la$ in the algebra $\Lambda^{(m)}(\ep)$ such that\\
(1) $Q_\la= \sum_{\la\leq\mu\in\mathcal P_m}C_{\la\mu}q_\mu$, where $C_{\lambda\mu} \in\mathbb Q(\epsilon)$ with $C_{\la\la}=1,$\\
(2) $Q_\la$ is an eigenvector of  $X_0$.\\
Moreover these $Q_\la$'s give rise to an orthogonal basis of $\Lambda^{(m)}(\ep)$ and $X_0.Q_\la=c_{\la\la}Q_\la$ with $c_{\la\la}$ given in Theorem \ref{T:target operator}.
\end{corollary}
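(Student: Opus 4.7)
My plan is to work one graded piece at a time. Fix $n$, and by Lemma \ref{L:3bases} use $\{q_\mu : \mu\in\mathcal P_m(n)\}$ as a finite basis of $\Lambda^{(m)}_n(\epsilon)$. Choose any total order on $\mathcal P_m(n)$ refining the dominance order. By Remark \ref{r:eigenvalue},
\begin{equation*}
X_0\cdot q_\la = c_{\la\la}\, q_\la + \sum_{\la<\mu\in\mathcal P_m(n)} c'_{\la\mu}\, q_\mu,
\end{equation*}
so the matrix of $X_0|_{\Lambda^{(m)}_n(\epsilon)}$ is upper triangular in this ordered basis with diagonal entries $c_{\la\la}$, and by Remark \ref{R:clacmu} these diagonal entries are pairwise distinct as $\la$ ranges over $\mathcal P_m(n)$.

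The existence and uniqueness of $Q_\la$ then amount to inverting a triangular linear system. For each $\la\in\mathcal P_m(n)$ I would seek an expansion $Q_\la = q_\la + \sum_{\la<\mu\in\mathcal P_m(n)} C_{\la\mu}\, q_\mu$ and impose $X_0 Q_\la = c_{\la\la}Q_\la$. Reading off the coefficient of $q_\nu$ for each $\nu>\la$ in the chosen order produces the recursion
\begin{equation*}
(c_{\nu\nu}-c_{\la\la})\, C_{\la\nu} \;=\; -\sum_{\la\leq\mu<\nu}c'_{\mu\nu}\, C_{\la\mu}, \qquad C_{\la\la}=1,
\end{equation*}
and since $c_{\nu\nu}\neq c_{\la\la}$ the coefficient $C_{\la\nu}$ is uniquely determined; running through $\nu$ from the bottom of the order upward solves the system in one pass. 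The change-of-basis matrix from $\{q_\la\}_{\la\in\mathcal P_m(n)}$ to $\{Q_\la\}_{\la\in\mathcal P_m(n)}$ is unitriangular, hence the $Q_\la$'s form a basis of $\Lambda^{(m)}_n(\epsilon)$, and letting $n$ vary yields a basis of $\Lambda^{(m)}(\epsilon)$.

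For orthogonality I would invoke the self-adjointness of $X_0$ from Theorem \ref{T:target operator}: for $\la\neq\mu$ in $\mathcal P_m$ with $|\la|=|\mu|$,
\begin{equation*}
c_{\la\la}\langle Q_\la,Q_\mu\rangle = \langle X_0 Q_\la, Q_\mu\rangle = \langle Q_\la, X_0 Q_\mu\rangle = c_{\mu\mu}\langle Q_\la,Q_\mu\rangle,
\end{equation*}
so $(c_{\la\la}-c_{\mu\mu})\langle Q_\la,Q_\mu\rangle=0$, and Remark \ref{R:clacmu} forces $\langle Q_\la,Q_\mu\rangle=0$; the case $|\la|\neq|\mu|$ is automatic since the scalar product in \eqref{def} respects the grading. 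The only genuine content of the argument is the distinctness of the eigenvalues $c_{\la\la}$ on $\mathcal P_m$, which is precisely what the restriction from $m$-regular to $m$-reduced partitions buys and has already been extracted in Remark \ref{R:clacmu}; everything else is bookkeeping on a triangular system.
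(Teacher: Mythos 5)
Your proposal is correct and follows essentially the same route as the paper: solve the triangular system coming from Remark \ref{r:eigenvalue} by the recursion $(c_{\nu\nu}-c_{\la\la})C_{\la\nu}=-\sum_{\la\leq\mu<\nu}c'_{\mu\nu}C_{\la\mu}$, using distinctness of the eigenvalues on $\mathcal P_m$, and deduce orthogonality from the self-adjointness of $X_0$. The only cosmetic differences are your explicit restriction to graded pieces and the remark that the unitriangular change of basis yields a basis, both of which are implicit in the paper's argument.
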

\begin{proof}

To prove the first statement we need to show that there is a unique set
of constants $C_{\la\mu}, \la\leq\mu\in\mathcal
P_m$ 
with $C_{\la\la}=1$ such that $\sum_{\la\leq\mu\in\mathcal P_m}C_{\la\mu}q_\mu$ is an eigenvector of $X_0$,
i.e.,
\begin{align}\label{E:findingcoefficients}
\sum_{\la\leq\mu\in\mathcal P_m}c_{\la\la}C_{\la\mu}q_\mu=\sum_{\la\leq\mu\in\mathcal P_m}C_{\la\mu}X_0\cdot q_\mu.
\end{align}
Here we used the fact that the eigenvalue is $c_{\la\la}$.
Let $X_0\cdot q_\mu=\sum_{\la\leq\mu\in\mathcal P_m} c'_{\mu\nu}q_\nu$ (see Theorem \ref{T:target operator}),
and we compare the coefficients of $q_\nu$ for $\nu\geq\la$ in both sides of (\ref{E:findingcoefficients}). For $\nu=\la$, the coefficients in both sides are already equal, and $C_{\la\la}=1$ is fixed. For $\nu>\la$, we solve that
\begin{align*}
C_{\la\nu}=\frac{\sum_{\nu>\mu\geq\la}C_{\la\mu}c'_{\mu\nu}}{c_{\la\la}-c_{\nu\nu}}.
\end{align*}
By induction on the dominance order we see that each $C_{\la\nu}$ is uniquely determined by this formula. This finishes the proof of the first statement.

 As for the second statement, we first have $X_0\cdot Q_\la=c_{\la\la}Q_\la$ by Theorem \ref{T:target operator} (and Remark \ref{r:eigenvalue}).
 It follows from the self-adjointness of $X_0$ that
\begin{align*}
&c_{\la\la}\langle Q_\la,Q_\mu\rangle=\langle X_0\cdot Q_\la,Q_\mu\rangle=\langle Q_\la,X_0\cdot Q_\mu\rangle\\
&=c_{\mu\mu}\langle Q_\la,Q_\mu\rangle.
\end{align*}
Therefore $Q_{\lambda}$ are orthogonal by the fact that $c_{\la\la}\neq c_{\mu\mu}$ for $\la\neq\mu$.
\end{proof}

\begin{remark} Set $c=\xi_m^{-1}$; then $\epsilon_n=\frac{1-q^n}{1-\xi_m^{-n}}$ corresponds to the specialization of $t=\xi_m^{-1}$ in  Macdonald functions. The vertex operator and  the eigenvalue are given by
\begin{align*}
&X(z)=\exp\Big(\sum_{m\nmid n\geq1}
\frac{z^{n}h_{-n}}{n} (\xi_m^{-n}-1)\Big) \exp\Big(\sum_{m\nmid n\geq1}
\frac{z^{-n}h_n}{n}(1-\xi_m^n) \Big),\\
&c_{\la\la}=1+(1-\xi_m)\sum_{i=1}^{l(\la)}(q^{\la_i}-1)\xi_m^{i-1}.
\end{align*}

Furthermore, if we consider $m=2$ ($\xi_m=-1$) and $q=0$, then $\epsilon_n=\frac12$ (for odd $n$), then $Q_\la$'s (for strict partition $\la$) are Schur Q-functions.
\end{remark}

\begin{remark} The symmetric functions we constructed bear certain similarity
with the deformation of Schur's Q-functions \cite{J2}
constructed via vertex operators similar to \cite{J1}. These $q$-Schur Q-functions 
were later systematically studied in \cite{TZ} where they
conjectured the positivity of the $q$-Kostka polynomials. In \cite{WW1, WW2}
a different deformation of Schur Q-functions was studied
from the viewpoint of representation theory of spin
groups.
\end{remark}

\centerline{\bf Acknowledgments}
We would like to thank Jinkui Wan and Weiqiang Wang for stimulating discussion
on a related problem.
The first and third authors thank China Scholarship Council for partial support and
MIT and NCSU respectively for hospitality during part of this work.
 The second author is grateful to the support of Simons Foundation grant 198129,
NSFC 11271138, Humboldt foundation 
and Max-Planck Institute for Mathematics in the Sciences
at Leipzig for hospitality 
during the work.

\bibliographystyle{amsalpha}

\end{document}